\newtheorem{theorem}{Theorem}
\newtheorem{lemma}{Lemma}
\newtheorem{corollary}{Corollary}
 \DeclareMathOperator{\mes}{mes}
 \DeclareMathOperator{\supp}{supp}
\begin{document}
\author{K\'aroly Nagy, Mohamed Salim}
\title[Restricted summability of ...]{Restricted summability of the multi-dimensional Ces\`aro means of  Walsh-Kaczmarz-Fourier series }
\thanks{Research supported by project T\'AMOP-4.2.2.A-11/1/KONV-2012-0051, GINOP-2.2.1-15-2017-00055 and UAEU UPAR  2017 Grant G00002599.}

\address{K. Nagy, Institute of Mathematics and Computer Sciences, University of Ny\'\i
regyh\'aza, P.O. Box 166, Ny\'\i regyh\'aza, H-4400 Hungary }
\email{nagy.karoly@nye.hu}

\address{M. Salim, Department of Mathematical Sciences, UAEU, Al Ain, United Arab Emirates}
\email{msalim@uaeu.ac.ae}

\date{}

\begin{abstract} 
The properties of the maximal operator of the $(C,\alpha)$-means ($\alpha=(\alpha_1,\ldots,\alpha_d)$) of the  multi-dimensional Walsh-Kaczmarz-Fourier series are discussed,  where the set of indices is inside a cone-like set. We prove that the maximal operator  is bounded from $H_p^\gamma$ 
to $L_p$ for $p_0< p $ ($p_0=\max{1/(1+\alpha_k): k=1,\ldots, d}$) and is of weak type $(1,1)$. As a corollary we get the 
theorem of Simon \cite{S1} on the a.e. convergence of cone-restricted two-dimensional  Fej\'er means of integrable functions. 
At the endpoint case $p=p_0$, we show that the maximal operator $\sigma^{\kappa,\alpha,*}_L$ is not bounded from the Hardy space $H_{p_0}^\gamma$ to the space $L_{p_0}$. 
\end{abstract}
\maketitle
\noindent
\textbf{Key words and phrases:} Walsh-Kaczmarz system, maximal operator, multi-dimensional system, restricted summability, a.e. convergence, Ces\`aro means.
\par\noindent
\textbf{2010 Mathematics Subject Classification.} 42C10.

\section{Definitions and notation}

Now, we give a brief introduction to the theory of dyadic analysis (for more details see \cite{AVDR,SWSP}).
Let $\mathbb{P}$ denote the set of positive integers, $\mathbb{N:=P}\cup \{ 0\}.$ Denote ${\mathbb Z}_{2}$ the discrete cyclic group of order 2, that is ${\mathbb Z}_{2}$ has two elements 0 and 1, the group operation is the modulo 2 addition. The topology is given by that  every subset is open. The Haar measure on ${\mathbb Z}_{2}$ is given
such that the measure of a singleton is 1/2, that is, $\mu(\{ 0\})=\mu(\{ 1\})=1/2$. Let $G$ be the
complete direct product of  countable infinite copies of the
compact groups ${\mathbb Z}_{2}.$ The elements of $G$ are sequences of the form
$x=\left( x_{0},x_{1},...,x_{k},...\right) $ with coordinates $x_{k}\in
\{0,1\}\left( k\in \mathbb{N}\right) .$ The group operation on $G$
is the coordinate-wise addition, the measure (denoted
by $\mu $) is the product measure and the topology is the  product 
topology. The compact Abelian group $G$ is called the Walsh group.
A base for the neighbourhoods of $G$ can be given by 
\begin{equation*}
I_{0}\left( x\right) :=G,\quad I_{n}\left( x\right)
:=I_{n}\left( x_{0},\ldots,x_{n-1}\right)
:=\left\{ y\in
G:\,y=\left( x_{0},\ldots,x_{n-1},y_{n},y_{n+1},\ldots \right) \right\} ,
\end{equation*}
$\left( x\in G,n\in \mathbb{N}\right) $, $I_n(x)$ are called  dyadic intervals.
Let $0=\left( 0:i\in \mathbb{N}\right) \in G$ denote the null element of
$G,$ and for the simplicity we write $I_{n}:=I_{n}\left( 0\right)$ $\left( n\in
\mathbb{N}\right) .$
Set $e_{n}:=\left( 0,\ldots,0,1,0,\ldots \right) \in G,$ the $n$th coordinate of which is 1 and the rest are zeros
$\left( n\in \mathbb{N}\right) .$

Let $r_k$ denote the $k$-th Rademacher function, it is defined by 
\begin{equation*}
r_{k}\left( x\right) :=\left( -1\right) ^{x_{k}}
\end{equation*}
($k\in \mathbb{N}$, $x\in G$). 

The Walsh-Paley system is defined as the product system of Rademacher functions. Now, we give more details. If $n\in \mathbb{N}$, then $n$ can be expressed in the form  $n=\sum\limits_{i=0}^{\infty }n_{i}2^{i}$, where $n_{i}\in
\{0,1\}\quad\left( i\in \mathbb{N}\right) $.
Define the order of a natural number $n$ by 
$\left| n\right| :=\max\{j\in \mathbb{N:}n_{j}\neq 0\}$, that is
$2^{\left| n\right|}\leq n<2^{\left| n\right| +1}.$

The Walsh-Paley system is
\begin{equation*}
w_{n}\left( x\right) :=\prod\limits_{k=0}^{\infty }\left(
r_{k}\left( x\right) \right) ^{n_{k}}=r_{\left| n\right| }\left(
x\right) \left( -1\right) ^{\sum\limits_{k=0}^{\left| n\right|
-1}n_{k}x_{k}}\quad\left( x\in G,n\in \mathbb{P}\right) .
\end{equation*}

The Walsh-Kaczmarz functions are defined by $\kappa_0=1$ and for $n\ge 1$
$$
\kappa_n(x):=r_{\vert n\vert }(x)\prod_{k=0}^{\vert n\vert -1}
(r_{\vert n\vert -1-k}(x))^{n_k}
=r_{\vert n\vert }(x)(-1)^{\sum_{k=0}^{\vert n\vert -1}
n_kx_{\vert n\vert -1-k}}.
$$
The set of Walsh-Kaczmarz functions and the set of Walsh-Paley functions are equal in dyadic blocks. Namely,
$$
\{\kappa_n: 2^k\leq n<2^{k+1}\} =\{ w_n:2^k\leq n<2^{k+1}\}
$$
for all $k\in {\mathbb P}$. Moreover, $ \kappa_0=w_0.$

V.A. Skvortsov (see \cite{Sk1}) gave a relation between the
Walsh-Kaczmarz functions and the Walsh-Paley functions. Namely, he defined a  transformation $\tau_A\colon G\to G$ 
$$
\tau_A(x):=(x_{A-1},x_{A-2},...,x_1,x_0,x_A,x_{A+1},...)$$
for
$A\in
{\mathbb N}.$
By the definition of $\tau_A$, we have the following connection 
$$
\kappa_n(x)=r_{\vert n\vert }(x)w_{n-2^{|n|}}(\tau_{\vert n\vert}(x)) \quad
(n\in {\mathbb N},x\in G).
$$

Let $0<\alpha$ and let 
$$
A_j^\alpha :={j+\alpha \choose j}=\frac{(\alpha +1)(\alpha +2)\ldots (\alpha +j)}{j!} \quad (j\in {\mathbb N}; \alpha\neq -1,-2,\ldots ).
$$
It is known that $A_j^\alpha \sim O(j^\alpha)$ ($j\in {\mathbb N}$) (see Zygmund \cite{Zyg}). 
The one-dimensional Dirichlet kernels and  Ces\'aro kernels are defined by
$$
 D_{n}^\psi  := \sum _{k=0}^{n-1} \psi_k, \quad K_{n}^{\psi,\alpha}(x):=\frac{1}{A_n^\alpha}\sum_{k=0}^{n}A_{n-k}^{\alpha-1} D_{k}^\psi(x),
$$ 
for $\psi_n =w_n$ ($n\in{\mathbb P}$)   or $\psi_n=\kappa_n$  ($n\in {\mathbb
P}$), $D_0^\psi :=0.$ 

Choosing $\alpha=1$ we defined the $n$th Fej\'er mean, as special case. 
For  Walsh-Paley-Fej\'er  kernel  functions  we  have
$K_n(x)\to 0$, while $n\to \infty$ for  every $x\neq 0$.   However,  they  can  take  negative
values which is a different situation from the trigonometric case. On  the  other  hand,  for  Walsh-Kaczmarz-Fej\'er  kernel  functions
we have $K_n(x)\to\infty $, while $n\to\infty$ at every dyadic rational.
The last fact shows that the behavior of Walsh-Kaczmarz system worse than the 
behavior of Walsh-Paley system in this special sense. See later inequality \eqref{eq-sn}, as well. 

It is well-known that the $2^n$th Dirichlet kernels have a
closed form (see e.g.  \cite{SWSP})
\begin{equation}\label{Dir}
D_{2^n}^w(x)=D_{2^n}^\kappa (x)=D_{2^n} (x)=\begin{cases} 0, & \textrm{if } x\not\in I_n,\\
2^{n},& \textrm{if } x\in I_n.\end{cases}
\end{equation}

The Kronecker product $\left( \psi_{n}:n\in \mathbb{
N}^d\right) $ of $d$ Walsh-(Kaczmarz) system is said to be the
$d$-dimensional (multi-dimensional) Walsh-(Kaczmarz) system. That is,
$$
\psi_{n}\left( x\right) =\psi_{n_1}\left( x^1\right)\dots
\psi_{n_d}\left( x^d\right) ,
$$
where $n=(n_1,\ldots ,n_d)$ and $x=(x^1,\ldots ,x^d)$.

If $f\in L^1\left( G^{d}\right) ,$ then the number $\widehat{f}^\psi\left( n\right) 
:=\int\limits_{G^{d}}f\psi_{n}\quad\left(
n\in \mathbb{N}^d\right) $ is said to be the $n$th Walsh-(Kaczmarz)-Fourier coefficient of $f.$ We can extend this
definition to
martingales in the usual way (see Weisz \cite{we2, we3}). 

The $d$-dimensional $(C,\alpha)$ ($\alpha=(\alpha_{n_1},\ldots ,\alpha_{n_d})$) or Ces\`aro mean of a martingale is defined by 
$$
\sigma_{n}^{\psi,\alpha} f(x):=\frac{1}{\prod_{i=1}^d A_{n_i}^{\alpha_i}}\sum_{j=1}^d\sum_{k_j=0}^{n_j}    
\prod_{i=1}^d A_{n_i-k_i}^{\alpha_i-1}
S_{k}^\psi(f;x).
$$
It is known that 
$$
K_{n}^{\psi,\alpha}(x)=K_{n_1}^{\psi,\alpha_1}(x^1)\dots K_{n_d}^{\psi,\alpha_d}(x^d).
$$

In 1948 $\breve{\text{S}}$neider \cite{Snei} introduced the Walsh-Kaczmarz
system and showed that the inequality
\begin{equation}\label{eq-sn}
\limsup_{n\to \infty }\frac{D_{n}^{\kappa }(x)}{\log n}\geq C>0
\end{equation}
holds a.e. 
This inequality shows a big difference between the two arrangements of  Walsh system.

In 1974 Schipp \cite{Sch2} and  Young \cite{Y} proved
that the Walsh-Kaczmarz system is a convergence system. Skvortsov
in 1981 \cite{Sk1} showed that the Fej\'er means with respect to
the Walsh-Kaczmarz system converge uniformly to $f$ for any
continuous functions $f$. G\'at \cite{gat} proved for any
integrable functions, that the Fej\'er means with respect to the
Walsh-Kaczmarz system converge almost everywhere to the function. 
Moreover, he showed that the maximal operator $\sigma^{\kappa,*}$ of Walsh-Kaczmarz-Fej\'er means 
 is weak type $(1,1)$ and of type $(p,p)$ for all $1<p\leq\infty$.
G\'at's result was generalized by Simon \cite{S2}, 
who showed that the maximal operator $\sigma^{\kappa,*}$ is of type $(H_p,L_p)$ for $p>1/2$.
In the endpoint case $p=1/2$  
Goginava \cite{Gog-PM} proved that the maximal operator $\sigma^{\kappa,*}$ is not of type $(H_{1/2},L_{1/2})$ and
Weisz \cite{We5} showed that the maximal operator is of weak type $(H_{1/2},L_{1/2})$. Recently, the rate of the deviant behaviour in the endpoint $p=1/2$ was discussed by Goginava and the author \cite{gog-nagy}. The case $0<p<1/2$ can be found in the papers of Tephnadze \cite{Tp1,Tp2}.

In 2004, the boundedness of the maximal operator of the Ces\`aro means ($0<\alpha\leq 1$) was investigated by Simon \cite{S3}.  It was showed that the maximal operator is bounded from the Hardy space $H_p$ to the space $L_p$ for $p>p_0:=1/(1+\alpha)$ and of weak type (1,1). Moreover, he showed the inequality
\begin{equation}\label{eq-norm}
\sup_n \Vert K_n^{\kappa,\alpha} \Vert_1< \infty \quad \textrm{ for all } 0<\alpha\leq 1.
\end{equation}
In the endpoint case Goginava showed that the endpoint  $p_0:=1/(1+\alpha)$ is essential. That is, he proved that the maximal operator of Ces\`aro means is not bounded from the Hardy space $H_{p_0}$ to the space $L_{p_0}$ \cite{Gog-Ann}.

For $x=(x^1,x^2,\ldots ,x^d)\in G^d$ and $n=(n_1,n_2,\ldots ,n_d)\in {\mathbb N}^d$ the $d$-dimensional rectangles are defined by $I_n(x):=I_{n_1}(x^1)\times\dots\times I_{n_d}(x^d)$. For $n\in {\mathbb N}^d$ the $\sigma $-algebra generated by the rectangles $\{ I_n(x), x\in G^d\}$ is denoted by ${\mathcal F}_{n}$. The conditional expectation operators relative to ${\mathcal F}_n$ are denoted by $E_n$.

Suppose that for all $j=2,\ldots,d$ the functions $\gamma_j\colon [1,+\infty)\to [1,+\infty)$ 
are strictly monotone increasing continuous functions with properties $\lim_{+\infty} \gamma_j =+\infty$ and $ \gamma_j (1)=1$. Moreover, suppose that there exist $\zeta,c_{j,1},c_{j,2}>1$ such that the inequality 
\begin{equation}\label{CRF}
c_{j,1}\gamma_j(x)\leq \gamma_j(\zeta x)\leq c_{j,2}\gamma_j (x)
\end{equation}
holds for each $x\geq 1.$ In this case the functions $\gamma_j$ are called CRF (cone-like restriction functions). Let $\gamma:=(\gamma_2,\ldots ,\gamma_d)$ and $\beta_j\geq 1$ be fixed ($j=2,\ldots ,d$).
We will investigate the maximal operator of the multi-dimensional $(C,\alpha)$ means and the convergence 
over a cone-like set $L$ (with respect to the first dimension), where 
$$
L:=\{ n\in {\mathbb N}^d: \beta_j^{-1}\gamma_j(n_1)\leq n_j\leq \beta_j \gamma_j(n_1), j=2,\ldots ,d\}.
$$
If each $\gamma_j$ is the identical function then we get a cone. The cone-like sets were introduced by G\'at in dimension two \cite{G3}. The condition \eqref{CRF} on the function $\gamma$ 
is natural, because G\'at \cite{G3} proved that to each cone-like set with respect to 
the first dimension there exists a larger cone-like set with respect to the second dimension and reversely, if and only if the inequality \eqref{CRF} holds.

Weisz defined a new type martingale Hardy space depending on the function $\gamma$ (see \cite{we-cone}). For a given 
$n_1\in{\mathbb N}$ set $n_j:=|\gamma_j(2^{n_1})|$ ($j=2,\ldots ,d$), that is, $n_j$ is the order of $\gamma_j(2^{n_1})$ 
(this means that $2^{n_j}\leq \gamma_j(2^{n_1})<2^{n_j+1}$ for $j=2,\ldots ,d$). Let $\overline{n}_1:=(n_1,\ldots ,n_d)$. Since, the functions $\gamma_j$ are increasing, the sequence $(\overline{n}_1,\ n_1\in{\mathbb N})$ is increasing, too.
It is given a class of one-parameter martingales $f=(f_{\overline{n}_1},\ n_1\in{\mathbb N})$ with respect to the $\sigma$-algebras $({\mathcal F}_{\overline{n_1}},\  n_1\in{\mathbb N})$. The maximal function of a martingale $f$ is defined by
$
f^{*}=\sup\limits_{n_1\in \mathbb{N}}\left| f_{ \overline{n}_1 }\right| .
$
For $0<p\leq \infty $ the  martingale Hardy space $H_{p}^\gamma (G^d)$ consists of all
martingales for which
$
\left\| f\right\| _{H_{p}^\gamma}:=\left\| f^{*}\right\| _{p}<\infty .
$
It is known (see \cite{we3}) that $H_p^\gamma \sim L_p$ for $1<p\leq \infty$, where $\sim$ denotes the equivalence of the norms and spaces.

If $f\in L_1(G^d)$ then it is easy to show that the sequence $(S_{2^{n_1},...,2^{n_d}}(f): \overline{n_1}=(n_1,...,n_d), n_1\in{\mathbb N})$ is a one-parameter martingale with respect to the $\sigma$-algebras $({\mathcal F}_{\overline{n_1}},\  n_1\in{\mathbb N})$. In this case the maximal function can also be given by 
$$
f^*(x)=\sup_{n_1\in {\mathbb N}} \frac{1}{\mes (I_{\overline{n_1}}(x))} \left|
\int_{ I_{\overline{n_1}}(x)}f(u) d\mu(u)
\right|=\sup_{n_1\in {\mathbb N}} |S_{2^{n_1},...,2^{n_d}}(f,x)|
$$
for $x\in G^d$.

We define the maximal operator $\sigma_L^{\kappa,\alpha,*}$ by 
$$
\sigma_L^{\kappa,\alpha,*}f(x):=\sup_{n\in L} |\sigma_n^{\kappa,\alpha} f (x)|.
$$

For double Walsh-Paley-Fourier series, M\'oricz, Schipp and Wade \cite{MSW} proved  that $\sigma_n f$ converge to $f$ a.e. in the Pringsheim sense (that is, no restriction on the indices other than $\min (n_1,n_2)\to\infty$) for all 
functions $f\in L\log^+ L$. 
The a.e. convergence of  Fej\'er means $\sigma_n f$ of  
integrable functions, where the set of indices is inside a positive cone around the identical function, that is $\beta^{-1}\leq n_1/n_2\leq \beta$ is provided with some fixed parameter $\beta\geq 1$, was proved by G\'at \cite{G} and Weisz \cite{We}. A common generalization of results of M\'oricz, Schipp, Wade \cite{MSW} and G\'at \cite{G},  Weisz \cite{We} for cone-like set was given by the first author  and G\'at in \cite{G-N}. Namely,  a necessary and 
sufficient condition for cone-like sets in order to preserve the convergence property, was given. The trigonometric case was treated by G\'at \cite{G3}.

Connecting to the original paper \cite{G3} on trigonometric system, G\'at asked the following. 
What could we state for other systems for example Walsh-Paley, Walsh-Kaczmarz and Vilenkin systems and for 
other means for example logarithmic means, Riesz means, (C,$\alpha$) means? Some parts of G\'at's question was answered by Weisz \cite{we-cone}, Blahota and the first author \cite{B-N, N-GMJ} and naturally in paper \cite{G-N}.

In 2011, the properties of the maximal operator of the $(C,\alpha)$ and Riesz means of a multi-dimensional Vilenkin-Fourier series  provided that 
the supremum in the maximal operator is taken over a cone-like set, 
was discussed by Weisz \cite{we-cone}. Namely, it was proved that the maximal operator is bounded from dyadic Hardy space $H_p$ 
to the space $L_p$ for $p_0< p \leq\infty$ ($p_0:=\max \{ 1/(1+\alpha_k): k=1,\ldots ,d\}$) and is of weak type $(1,1)$. Recently, it was showed that the index $p_0$ is sharp. Namely, it was proven that the maximal operator is not bounded from the dyadic Hardy space $H_{p_0}$ 
to the space $L_{p_0}$ \cite{B-N}.   Detailed list of the reached results for one- and several dimensional Walsh-like systems can be found in \cite{We-amapn}.

For the two-dimensional Walsh-Kaczmarz-Fourier series Simon proved \cite{S1} that the  cone-restricted maximal operator 
of the Fej\'er means is bounded from the Hardy space $H_p$ 
to the space $L_p$ for all $1/2< p$ (here the set of indices is inside a positive cone around the identical function). That is, the a.e. convergence of cone-restricted Fej\'er means holds for the 
Walsh-Kaczmarz system, as well. 
Moreover, it was proved that $p=1/2$ is essential.  In 2007, Goginava 
and the first author proved that the cone-restricted maximal operator is not bounded from the Hardy space $H_{1/2}$ to the space weak-$L_{1/2}$ \cite{gog-N}. The cone-like restricted two-dimensional maximal operator of Fej\'er means was discussed in \cite{N-GMJ}.

Motivated by the works of Weisz \cite{we-cone}, Simon \cite{S1} and the above mentioned question of G\'at we prove that 
 the maximal operator $\sigma^{\kappa,\alpha,*}_L$ is bounded from the dyadic Hardy space $H_p^\gamma$ 
to the space $L_p$ for $p_0< p $ and is of weak type $(1,1)$. As a corollary we get the 
theorem of Simon \cite{S1} on the a.e. convergence of cone-restricted Fej\'er means. 
At the endpoint $p=p_0$, we show that the maximal operator $\sigma^{\kappa,\alpha,*}_L$ is not bounded from the Hardy space $H_{p_0}^\gamma$ to the space $L_{p_0}$. 

In dimension 2, the case $\alpha_1=\alpha_2=1$ was discussed in \cite{N-GMJ}. Unfortunately, the counterexample martingale and the method is not suitable for 
case $0<\alpha_i<1$ ($i=1,\ldots, d$).  

\section{Auxiliary propositions and main results}

First, we formulate our main theorems.

\begin{theorem}\label{thm1}Let $\gamma$ be CRF. 
The maximal operator ${\sigma}^{\kappa,\alpha,*}_L$  is  bounded from the Hardy space $H_{p}^\gamma$ to the
space $L_{p}$ for $p_0<p\leq 1$ ($p_0:=\max\{1/(1+\alpha_i); i=1,\ldots ,d\}$).
\end{theorem}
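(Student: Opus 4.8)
The plan is to follow the now-standard martingale Hardy space strategy: reduce the boundedness of the maximal operator to a uniform estimate on atoms, using that $H_p^\gamma$ has an atomic decomposition into $p$-atoms supported on dyadic rectangles $I_{\overline{n}_1}(x)$ adapted to the increasing sequence of $\sigma$-algebras $({\mathcal F}_{\overline{n}_1})$. Since $\sigma_n^{\kappa,\alpha}$ is linear, the usual interpolation-flavoured lemma of Weisz (quasi-local estimate on atoms plus $L_2$-boundedness) will give boundedness of $\sigma_L^{\kappa,\alpha,*}$ from $H_p^\gamma$ to $L_p$ for $p_0<p\le 1$ once we prove: (i) $\sigma_L^{\kappa,\alpha,*}$ is bounded on $L_2(G^d)$ (which is immediate from $\sup_n\|K_n^{\kappa,\alpha}\|_1<\infty$, the product kernel formula $K_n^{\kappa,\alpha}=\prod_i K_{n_i}^{\kappa,\alpha_i}$, and inequality \eqref{eq-norm} together with its $0<\alpha\le1$ analogue, or rather the general $\alpha>0$ version of the $L_1$-kernel bound); and (ii) for every $p$-atom $a$ with $\operatorname{supp} a\subseteq I:=I_{\overline{n}_1}(x)$, $\int a=0$, $\|a\|_\infty\le \mu(I)^{-1/p}$, we have $\int_{G^d\setminus I^{*}}|\sigma_L^{\kappa,\alpha,*}a|^p\,d\mu \le C$ with $I^*$ a bounded dilation of $I$.

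The core of the argument, and the main obstacle, is step (ii): the pointwise kernel estimates for the Walsh-Kaczmarz-Ces\`aro kernels off the support of an atom, restricted to indices $n\in L$. Here I would exploit Skvortsov's identity $\kappa_n(x)=r_{|n|}(x)\,w_{n-2^{|n|}}(\tau_{|n|}(x))$ to transfer the Kaczmarz-Ces\`aro kernel estimates to Walsh-Paley-Ces\`aro kernel estimates composed with the measure-preserving maps $\tau_{|n|}$, coordinate by coordinate (tensorizing over the $d$ variables). The known one-dimensional kernel bounds for $K_n^{w,\alpha}$ — of the type $|K_n^{w,\alpha}(x)|\le C\sum_{A,B} 2^{A+B(\alpha-1)} \cdots$ involving the dyadic blocks of $x$ — then get pulled back through $\tau_{|n_i|}$; the subtlety, exactly as in Simon's and G\'at's treatment of the Kaczmarz-Fej\'er case, is that $\tau_{|n_i|}$ scrambles the first $|n_i|$ coordinates, so the effective "level" at which $x$ sits relative to the kernel changes, and one must track this carefully. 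The cone-like restriction $\beta_j^{-1}\gamma_j(n_1)\le n_j\le\beta_j\gamma_j(n_1)$ enters precisely to control $|n_j|$ in terms of $|n_1|$ (up to bounded additive error, using \eqref{CRF}), which is what couples the $d$ one-dimensional estimates so that the total integral over $G^d\setminus I^*$ stays summable; without the cone restriction the cross-terms would not be controllable, which is the whole point of working over $L$.

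Once the kernel estimates are in hand, step (ii) is a computation: split $G^d\setminus I^*$ into the dyadic annuli determined by the coordinates where $u$ first differs from the centre of $I$, bound $|\sigma_n^{\kappa,\alpha}a(u)|$ by $\|a\|_1\,\sup_{n\in L}\|K_n^{\kappa,\alpha}(\cdot,u)\|_{\text{on that annulus}}$ — using the zero mean of $a$ to replace the kernel by its difference from a suitable $E_{\overline{n}_1}$-average — and sum the resulting geometric-type series in $A$ and $B$, where the exponent $\alpha_i-1<0$ forces the constraint $p>1/(1+\alpha_i)$ for each $i$, hence $p>p_0$. Finally, the weak $(1,1)$ statement and the a.e.\ convergence corollary follow by the standard route: $L_2$-boundedness plus the quasi-local estimate give weak $(1,1)$ via Calder\'on–Zygmund decomposition, and a.e.\ convergence on the dense class of Walsh-Kaczmarz polynomials (where it is trivial since $\sigma_n^{\kappa,\alpha}P\to P$ over $n\in L$ as $n_1\to\infty$) together with the weak $(1,1)$ maximal bound yields convergence for all $f\in L_1$, in particular recovering Simon's two-dimensional Fej\'er result as the case $d=2$, $\alpha_1=\alpha_2=1$, $\gamma_2=\mathrm{id}$.
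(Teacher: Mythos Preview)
Your overall strategy is correct and matches the paper's: apply Weisz's lemma (Lemma~\ref{lemma-weisz}), verify boundedness on a nice space via the uniform $L_1$ bound on the kernels, and then establish a $p$-quasilocal estimate on atoms supported on $I=I_{N_1}\times\cdots\times I_{N_d}$, using the cone-like restriction to force $n_j\gtrsim 2^{N_j}$ once $n_1\gtrsim 2^{N_1}$. The paper uses $L_\infty\to L_\infty$ rather than $L_2\to L_2$, but that is immaterial.

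Where you diverge is in the execution of the quasilocal estimate. You propose to redo the one-dimensional Walsh--Kaczmarz--Ces\`aro kernel analysis from scratch: push through Skvortsov's transformation, import the dyadic-block bounds for $K_n^{w,\alpha}$, track how $\tau_{|n_i|}$ scrambles levels, decompose $G^d\setminus I^*$ into annuli, subtract averages to exploit the zero mean, and sum a double geometric series in $A,B$. That route works---it is essentially what Simon did in \cite{S3}---but the paper does not repeat any of it. Instead it exploits the tensor structure $K_n^{\kappa,\alpha}=\prod_i K_{n_i}^{\kappa,\alpha_i}$ to factor the integral over each block $I_{N_1}^r\!\!\!{}^{(\cdot)}\times\cdots\times I_{N_d}^r\!\!\!{}^{(\cdot)}$ of $\overline{I^r}$ (where each factor is either $I_{N_i}^r$ or its complement) into a product of purely one-dimensional integrals. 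For factors over $I_{N_i}^r$ it uses the trivial bound $\sup_n\Vert K_n^{\kappa,\alpha_i}\Vert_1<\infty$; for factors over $\overline{I_{N_j}^r}$ it simply \emph{quotes} Simon's one-dimensional estimate \cite[pp.~48--59]{S3},
\[
\int_{\overline{I_N}}\Bigl(\sup_{n\ge 2^N}\int_{I_N}|K_n^{\kappa,\alpha}(x+t)|\,d\mu(t)\Bigr)^p d\mu(x)\le c_p 2^{-N}\qquad\bigl(p>\tfrac{1}{1+\alpha}\bigr),
\]
as a black box. The zero-mean of the atom is used only at the outset, to show $\sigma_n^{\kappa,\alpha}a=0$ whenever $n_1\le 2^{N_1}/\delta$ (so that one may restrict to $n_j\ge 2^{N_j-r}$), not via any subtraction-of-averages argument. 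The upshot: your proposal would reproduce Simon's work inside the proof, whereas the paper's version is a short reduction to it.
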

By standard argument we have that, if $1<p\leq \infty$ then ${\sigma}^{\kappa,\alpha,*}_L$ is of type $(p,p)$ and 
of weak type $(1,1)$.

\begin{theorem}\label{thm2}
Let $\gamma$ be CRF. Then for any $f\in L^1$ 
$$
\lim_{\substack{\land n\to \infty \\ n\in L}} \sigma_n^{\kappa,\alpha} f=f
$$
holds almost everywhere.
\end{theorem}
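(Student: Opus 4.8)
The plan is to deduce Theorem~\ref{thm2} from Theorem~\ref{thm1} by the classical density (Banach principle) argument, so essentially all of the analytic work has already been done. The remark following Theorem~\ref{thm1} supplies the decisive ingredient: the maximal operator $\sigma^{\kappa,\alpha,*}_L$ is of weak type $(1,1)$, i.e.\ $\mu\{\sigma^{\kappa,\alpha,*}_Lf>\lambda\}\le C\lambda^{-1}\|f\|_1$ for all $f\in L^1(G^d)$ and $\lambda>0$. Granting this, it is enough to exhibit a subset $\mathcal D\subset L^1(G^d)$, dense in $L^1$, on which $\sigma_n^{\kappa,\alpha}g\to g$ as $\land n\to\infty$ with $n\in L$. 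Indeed, for an arbitrary $f\in L^1$ one fixes $\varepsilon>0$, picks $g\in\mathcal D$ with $\|f-g\|_1<\varepsilon$, and, using $\sup_{n\in L,\,\land n>N}|\sigma_n^{\kappa,\alpha}(f-g)|\le\sigma^{\kappa,\alpha,*}_L(f-g)$ for every $N$, estimates for each $\lambda>0$
$$
\mu\Bigl\{\limsup_{\substack{\land n\to\infty\\ n\in L}}|\sigma_n^{\kappa,\alpha}f-f|>\lambda\Bigr\}\le\mu\bigl\{\sigma^{\kappa,\alpha,*}_L(f-g)>\tfrac\lambda2\bigr\}+\mu\bigl\{|f-g|>\tfrac\lambda2\bigr\}\le\frac{C+2}{\lambda}\,\|f-g\|_1,
$$
the contribution of $g$ to the $\limsup$ being $0$; letting $\varepsilon\to0$ and then $\lambda\downarrow0$ shows that the $\limsup$ vanishes $\mu$-a.e., which is the assertion.

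For $\mathcal D$ I would take the Walsh--Kaczmarz polynomials, equivalently the union over $m\in{\mathbb N}$ of the finite-dimensional spaces of $\mathcal F_{(m,\dots,m)}$-measurable functions; this is dense in $L^1$ because $E_{(m,\dots,m)}g\to g$ in $L^1$-norm for every $g\in L^1$. To verify convergence on $\mathcal D$, I would first record that on $L$ the regime $\land n\to\infty$ is equivalent to $n_1\to\infty$: since each $\gamma_j$ is increasing with $\gamma_j(1)=1$ and $\gamma_j\to+\infty$, a bounded $n_1$ keeps every $n_j\le\beta_j\gamma_j(n_1)$ bounded, whereas $n_1\to\infty$ forces $n_j\ge\beta_j^{-1}\gamma_j(n_1)\to\infty$. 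Next, let $g\in\mathcal D$ satisfy $\widehat g^{\kappa}(\ell)=0$ unless $\ell_j<2^m$ for all $j$. Then $S_k^{\kappa}g$ is a fixed finite sub-sum of the Walsh--Kaczmarz expansion of $g$, so $\|S_k^{\kappa}g\|_\infty\le 2^{md}\|g\|_\infty$ uniformly in $k$, while $S_k^{\kappa}g=g$ whenever $k_j\ge 2^m$ for every $j$. Using the normalization $\frac1{\prod_{i=1}^dA_{n_i}^{\alpha_i}}\sum_{0\le k_i\le n_i}\prod_{i=1}^dA_{n_i-k_i}^{\alpha_i-1}=1$ (a consequence of $\sum_{j=0}^NA_j^{\alpha-1}=A_N^{\alpha}$) one obtains
$$
\sigma_n^{\kappa,\alpha}g-g=\frac1{\prod_{i=1}^dA_{n_i}^{\alpha_i}}\sum_{\substack{0\le k_i\le n_i\\ \exists j:\ k_j<2^m}}\Bigl(\prod_{i=1}^dA_{n_i-k_i}^{\alpha_i-1}\Bigr)\bigl(S_k^{\kappa}g-g\bigr),
$$
and a union bound over $j$, combined with $A_r^{\alpha}\sim r^{\alpha}$ and $A_{n_j-k_j}^{\alpha_j-1}\le C\,n_j^{\alpha_j-1}$ for $0\le k_j<2^m\le n_j/2$, bounds $\|\sigma_n^{\kappa,\alpha}g-g\|_\infty$ by $C_g\sum_{j=1}^d 2^m/n_j\le C_g\,d\,2^m/\land n\to0$. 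Thus $\sigma_n^{\kappa,\alpha}g\to g$ even uniformly on $\mathcal D$.

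The main obstacle is, frankly, minimal: once Theorem~\ref{thm1} and the attendant weak-$(1,1)$ estimate for $\sigma^{\kappa,\alpha,*}_L$ are in hand, Theorem~\ref{thm2} is a corollary, and the only cone- or Kaczmarz-specific point in this last step is the equivalence, on $L$, of $\land n\to\infty$ with $n_1\to\infty$, which guarantees that the dyadic blocks grow in every coordinate. The poor pointwise behaviour of the Walsh--Kaczmarz Dirichlet kernels recorded in \eqref{eq-sn} plays no role at this stage, since on the dense class $\mathcal D$ the partial sums are governed by a fixed finite-dimensional projection. As announced in the introduction, specializing $d=2$, $\gamma_2=\mathrm{id}$, $\alpha_1=\alpha_2=1$ then recovers Simon's theorem on the a.e.\ convergence of cone-restricted two-dimensional Walsh--Kaczmarz--Fej\'er means of integrable functions.
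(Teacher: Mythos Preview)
Your proof is correct and follows exactly the strategy the paper itself uses: the paper's own argument for Theorem~\ref{thm2} consists of the two sentences ``The set of Walsh--Kaczmarz polynomials is dense in $L_1$. The weak-type $(1,1)$ inequality and the usual density argument imply Theorem~\ref{thm2},'' and you have simply written out that density argument in full, including the verification of convergence on polynomials and the observation that $\land n\to\infty$ is equivalent to $n_1\to\infty$ on $L$.
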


We immediately have the theorem of Simon  \cite{S1} as a corollary.
\begin{corollary}[Simon \cite{S1}]
Let $f\in L^1$ and $\beta \geq 1$ be a fixed parameter. Then 
$$
\lim_{\substack{\land n\to \infty \\ \beta^{-1}\leq n_1/n_2\leq \beta}} \sigma_n^\kappa f=f
$$ 
holds a.e.
\end{corollary}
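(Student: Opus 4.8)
The plan is to derive the corollary as a direct specialization of Theorem~\ref{thm2}. I would take $d=2$, let the cone-like restriction function be the identity, $\gamma_2(x):=x$ for $x\in[1,+\infty)$, and set $\beta_2:=\beta$ and $\alpha:=(\alpha_1,\alpha_2)=(1,1)$. The first thing to verify is that $\gamma_2$ is a CRF in the sense of \eqref{CRF}: it is strictly monotone increasing and continuous on $[1,+\infty)$, it satisfies $\gamma_2(1)=1$ and $\lim_{+\infty}\gamma_2=+\infty$, and \eqref{CRF} holds with $\zeta:=2$, $c_{2,1}:=c_{2,2}:=2$, since $\gamma_2(\zeta x)=\zeta x=\zeta\gamma_2(x)$ and $\zeta>1$. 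Thus $\gamma=(\gamma_2)$ is admissible in Theorem~\ref{thm2}.

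With $\alpha=(1,1)$ the $(C,\alpha)$ kernels collapse to the Fej\'er kernels, so $\sigma_n^{\kappa,\alpha}f=\sigma_n^\kappa f$ for every $n$, and $p_0=\max\{1/(1+\alpha_1),1/(1+\alpha_2)\}=1/2$. Moreover, for these choices the cone-like set becomes
\[
L=\{(n_1,n_2)\in{\mathbb N}^2:\ \beta^{-1}n_1\le n_2\le\beta n_1\}=\{(n_1,n_2)\in{\mathbb N}^2:\ \beta^{-1}\le n_1/n_2\le\beta\},
\]
which is exactly the index set appearing in the statement of the corollary. On such a cone around the diagonal, the restriction $\land n\to\infty$ (that is, $\min(n_1,n_2)\to\infty$) is equivalent to $n_1\to\infty$, equivalently to $n_2\to\infty$, hence it coincides with Simon's Pringsheim-type restriction. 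Therefore Theorem~\ref{thm2}, applied in this situation, asserts precisely that $\sigma_n^\kappa f\to f$ almost everywhere as $\land n\to\infty$ with $n\in L$, for every $f\in L^1$, which is the claim.

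No genuine obstacle is expected: the argument is a pure specialization of the already-proven Theorem~\ref{thm2}. The only points requiring (trivial) care are checking that the identity function satisfies the CRF axioms — done above — and observing that Simon's restriction to a cone around the identical function is literally the $n\in L$, $\land n\to\infty$ condition of Theorem~\ref{thm2} once $\gamma_2$ is the identity and $\alpha=(1,1)$.
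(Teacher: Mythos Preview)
Your proposal is correct and is exactly the argument the paper intends: the corollary is stated right after Theorem~\ref{thm2} with the remark that it follows immediately, and your specialization $d=2$, $\gamma_2(x)=x$, $\beta_2=\beta$, $\alpha=(1,1)$ is precisely what makes this work. The verification that the identity is a CRF and that the resulting cone-like set $L$ coincides with Simon's cone $\beta^{-1}\le n_1/n_2\le\beta$ is accurate and complete.
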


\begin{theorem}
\label{thm3}Let $\gamma$ be CRF and $\alpha_1\leq \ldots \leq \alpha_d$. 
The maximal operator ${\sigma}^{\kappa,\alpha,*}_L$  is  not bounded from the Hardy space $H_{p_0}^\gamma$ to the
space $L_{p_0}$.
\end{theorem}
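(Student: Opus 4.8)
The plan is to construct an explicit counterexample martingale $f\in H_{p_0}^\gamma$ for which $\|\sigma^{\kappa,\alpha,*}_L f\|_{p_0}=+\infty$, mimicking the strategy Goginava used in the one-dimensional case \cite{Gog-Ann} and Blahota--Nagy used for Vilenkin systems \cite{B-N}, but adapted to the Walsh-Kaczmarz setting. By hypothesis $\alpha_1\leq\cdots\leq\alpha_d$, so the exponent $p_0=1/(1+\alpha_1)$ is governed by the first coordinate, which is exactly the coordinate over which the cone-like set $L$ is unrestricted; this is what makes the construction feasible. First I would take a lacunary sequence $(n^{(s)})$ of integers growing fast enough (so that the supports of the building blocks are disjoint) and set
$$
f:=\sum_{s} \lambda_s\, a_s, \qquad \lambda_s = 2^{(|n^{(s)}|)(d/p_0-d)}\cdot(\text{slowly growing factor}),
$$
where each $a_s$ is a $p_0$-atom supported on a dyadic rectangle of the form $I_{|n^{(s)}|}\times I_{m_2^{(s)}}\times\cdots\times I_{m_d^{(s)}}$, with $m_j^{(s)}$ chosen so that the rectangle is compatible with membership in $L$ (i.e. $m_j^{(s)}=|\gamma_j(2^{|n^{(s)}|})|$ up to the bounded parameters $\beta_j$). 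One checks $\|f\|_{H_{p_0}^\gamma}\lesssim(\sum_s|\lambda_s|^{p_0})^{1/p_0}<\infty$ by the atomic characterization of $H_{p_0}^\gamma$, choosing the slowly growing factor and the growth of $(n^{(s)})$ so the series converges.

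Next I would estimate $\sigma^{\kappa,\alpha}_n f$ from below at a suitable index $n=n^{(s)}\in L$ and on a suitable set of $x$ of not-too-small measure. Using the product structure $K_n^{\kappa,\alpha}(x)=\prod_{i=1}^d K_{n_i}^{\kappa,\alpha_i}(x^i)$ together with the Skvortsov relation $\kappa_n(x)=r_{|n|}(x)\,w_{n-2^{|n|}}(\tau_{|n|}(x))$, the one-dimensional Walsh-Kaczmarz Fej\'er/Ces\`aro kernel $K_{n_1}^{\kappa,\alpha_1}$ exhibits the bad growth near dyadic rationals recorded in the discussion after \eqref{eq-sn} — essentially $K_{n_1}^{\kappa,\alpha_1}$ is large (of order $\sim 2^{|n_1|(\alpha_1)}$ times a logarithmic-type or $2^{|n_1|}$-type factor, depending on how one arranges it) on a set whose measure is comparable to $2^{-|n_1|}$. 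In the remaining coordinates one only needs the crude bound that $K_{m_j}^{\kappa,\alpha_j}$ has a substantial positive part on $I_{m_j}$. Combining these, on a set $E_s$ with $\mu(E_s)\gtrsim 2^{-|n^{(s)}|}\prod_{j\geq2}2^{-m_j^{(s)}}$ one gets
$$
|\sigma^{\kappa,\alpha}_{n^{(s)}} f(x)| \gtrsim |\lambda_s|\cdot 2^{|n^{(s)}|(d - d/p_0)}\cdot(\text{large factor}) \gtrsim (\text{large factor}),
$$
where the "large factor" is precisely the extra divergence (a power of $|n^{(s)}|$, or a $\log$-type term) that distinguishes $p_0$ from $p>p_0$. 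Integrating $|\sigma^{\kappa,\alpha,*}_L f|^{p_0}$ over $\bigcup_s E_s$ then gives a divergent series $\sum_s (\text{large factor})^{p_0}\cdot\mu(E_s)\cdot 2^{\cdots}=+\infty$, contradicting boundedness into $L_{p_0}$.

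The main obstacle I expect is the lower estimate of the one-dimensional Walsh-Kaczmarz Ces\`aro kernel $K_{n_1}^{\kappa,\alpha_1}$ near dyadic rationals for general $0<\alpha_1\le 1$ (and its interaction, via $\tau_{|n|}$, with the placement of the atom): the Fej\'er case $\alpha_1=1$ is in \cite{N-GMJ, gog-N, Gog-Ann}, but the paper explicitly warns that "the counterexample martingale and the method is not suitable for the case $0<\alpha_i<1$", so one needs a genuinely different building block. I would handle this by choosing $n^{(s)}$ of a special dyadic form (e.g. $n^{(s)}=2^{|n^{(s)}|}+2^{t_s}$ with $t_s$ chosen relative to the atom's support) so that $D_k^\kappa$ splits via \eqref{Dir} into a clean sum, making $K_{n_1}^{\kappa,\alpha_1}$ computable on the relevant dyadic interval; the weights $A_{n-k}^{\alpha-1}\sim (n-k)^{\alpha-1}$ then produce exactly the factor $2^{|n^{(s)}|\alpha_1}$ against the normalization $1/A_{n_1}^{\alpha_1}\sim 2^{-|n^{(s)}|\alpha_1}$, leaving the residual divergence that kills the endpoint. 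A secondary technical point is verifying that the chosen rectangles genuinely lie in $L$ for all large $s$ — this is where the CRF condition \eqref{CRF} and the boundedness of $\beta_j$ are used, exactly as in the positive Theorem~\ref{thm1}.
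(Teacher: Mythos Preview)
Your plan is a plausible general strategy for endpoint unboundedness, but it diverges from the paper's proof in two essential respects, and the point you identify as ``the main obstacle'' is exactly where your sketch remains a genuine gap.

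First, the paper does \emph{not} build a single martingale $f=\sum_s\lambda_s a_s$ with $\|\sigma_L^{\kappa,\alpha,*}f\|_{p_0}=\infty$. Instead it exhibits a one-parameter family
\[
f_{\overline{n_1}}(x)=\bigl(D_{2^{n_1+1}}(x^1)-D_{2^{n_1}}(x^1)\bigr)\prod_{j=2}^{d}w_{2^{n_j}-1}(x^j)
\]
and shows the ratio $\|\sigma_L^{\kappa,\alpha,*}f_{\overline{n_1}}\|_{p_0}/\|f_{\overline{n_1}}\|_{H_{p_0}^\gamma}\to\infty$ as $n_1\to\infty$. The choice $w_{2^{n_j}-1}=\kappa_{2^{n_j}-1}$ in the auxiliary coordinates makes the Kaczmarz Fourier coefficients a single spike, so the summations over $k_2,\ldots,k_d$ collapse to harmless constants; you do not need any lower bound on $K_{m_j}^{\kappa,\alpha_j}$ in those coordinates at all.

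Second, and more importantly, the ``lower estimate of the one-dimensional Walsh--Kaczmarz Ces\`aro kernel'' that you flag as the obstacle is \emph{bypassed}, not confronted. The paper evaluates $\sigma_{L^N}^{\kappa,\alpha}f_{\overline{n_1}}$ at the indices $L^N=(2^{n_1}+N,[\gamma_2(2^{n_1}+N)],\ldots)$ for $1\le N<2^{n_1}$; the Skvortsov identity $D_k^\kappa(x)=D_{2^{|k|}}(x)+r_{|k|}(x)D_{k-2^{|k|}}^w(\tau_{|k|}(x))$ turns the first-coordinate sum into $A_N^{\alpha_1}K_N^{w,\alpha_1}(\tau_{n_1}(x^1))$, i.e.\ a \emph{Walsh--Paley} Ces\`aro kernel in the transformed variable. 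Goginava's Lemma~\ref{Lemma-Gog} then supplies the precise $L^{p_0}$ lower bound $\int_G\max_{1\le N<2^{n_1}}(A_N^{\alpha_1}|K_N^{w,\alpha_1}|)^{p_0}\gtrsim n_1/\log n_1$, which is the ``large factor'' you leave unspecified. Your proposed workaround of taking $n^{(s)}=2^{|n^{(s)}|}+2^{t_s}$ and computing $K_{n_1}^{\kappa,\alpha_1}$ directly on a dyadic interval is not needed and, without the Skvortsov reduction and Lemma~\ref{Lemma-Gog}, it is not clear your pointwise estimate would yield the right $L^{p_0}$ growth. (Incidentally, the sentence you quote about the method being ``not suitable for $0<\alpha_i<1$'' refers to the earlier construction in \cite{N-GMJ}, not to the counterexample approach per se; the present paper is precisely the replacement.)
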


To prove our Theorem \ref{thm1}, \ref{thm2}, \ref{thm3} we need the following Lemma of Weisz \cite{we3}, the concept of the atoms (for more details see \cite{we-cone}) and a  Lemma of Goginava \cite{Gog-Ann}.

A bounded measurable function $a$ is a $p$-atom, if there exists a dyadic $d$-dimensional rectangle $I\in{\mathcal F}_{\overline{n}_1}$, such that
\begin{itemize}
\item[a)]  $\supp a\subseteq I$,
\item[b)] $\Vert a\Vert_\infty \leq \mu(I)^{-1/p}$,
\item[c)] $\int_{I}a d\mu =0$.
\end{itemize}
\begin{lemma}[Weisz \cite{we3}]\label{lemma-weisz}
Suppose that the operator $T$ is $\sigma$-sublinear and $p$-quasilocal for any $0<p< 1$. If $T$ is bounded from $L_\infty$ to $L_\infty $, then
$$
\Vert Tf\Vert_p\leq c_p \Vert f\Vert_{H_p} \quad
\textrm{ for all } f\in H_p .
$$
\end{lemma}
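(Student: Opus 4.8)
The plan is to combine the atomic decomposition of the martingale Hardy space with the two hypotheses on $T$, reducing everything to a single uniform estimate over $p$-atoms. First I would invoke the atomic decomposition theorem for $H_p$: every $f\in H_p$ admits a representation $f=\sum_k \mu_k a_k$, where each $a_k$ is a $p$-atom with $\supp a_k\subseteq I_k$ for a dyadic rectangle $I_k\in{\mathcal F}_{\overline{n}_1}$, and the coefficients satisfy $\sum_k |\mu_k|^p\leq c_p\Vert f\Vert_{H_p}^p$. Because $T$ is $\sigma$-sublinear and $0<p<1$, one obtains the pointwise bound
$$
|Tf|^p\leq \Bigl(\sum_k |\mu_k|\,|Ta_k|\Bigr)^p\leq \sum_k |\mu_k|^p\,|Ta_k|^p ,
$$
where the second inequality is the subadditivity of $t\mapsto t^p$ for $p<1$. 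Integrating over $G^d$ then reduces the entire statement to proving a single uniform bound $\int_{G^d}|Ta|^p\,d\mu\leq c_p$ valid for every $p$-atom $a$; granting this, $\int_{G^d}|Tf|^p\,d\mu\leq c_p\sum_k|\mu_k|^p\leq c_p\Vert f\Vert_{H_p}^p$, which is the claim.

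To establish the uniform atomic bound I would split the integral over $G^d$ at a fixed enlargement $I^{r}$ of the support rectangle $I$, the one appearing in the definition of $p$-quasilocality and satisfying $\mu(I^{r})\leq c\,\mu(I)$. On the far part the estimate $\int_{G^d\setminus I^{r}}|Ta|^p\,d\mu\leq c_p$ is exactly $p$-quasilocality, so nothing further is required there. On the near part I would use the $L_\infty\to L_\infty$ boundedness of $T$ together with property (b) of the atom: since $\Vert a\Vert_\infty\leq\mu(I)^{-1/p}$ we get $\Vert Ta\Vert_\infty\leq c\,\mu(I)^{-1/p}$, whence
$$
\int_{I^{r}}|Ta|^p\,d\mu\leq \Vert Ta\Vert_\infty^p\,\mu(I^{r})\leq c\,\mu(I)^{-1}\,\mu(I^{r})\leq c_p ,
$$
the final step using $\mu(I^{r})\leq c\,\mu(I)$. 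Adding the two parts yields the uniform estimate $\int_{G^d}|Ta|^p\,d\mu\leq c_p$.

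The hard part will be making the interchange of $T$ with the infinite atomic sum fully rigorous. The $\sigma$-sublinearity hypothesis is tailored precisely to legitimise $|T(\sum_k\mu_k a_k)|\leq\sum_k|\mu_k|\,|Ta_k|$, but to apply it cleanly I would first prove the norm estimate on the dense subclass of martingales whose atomic expansion is finite, where the passage through the sum is unproblematic, and then extend to all of $H_p$ by density together with the lower semicontinuity of the $L_p$ quasi-norm. A secondary technical point is that the enlargement $I^{r}$ used in the near estimate must be the very one built into the quasilocality condition and must respect the filtration $({\mathcal F}_{\overline{n}_1})$; in the cone-like setting it is condition \eqref{CRF} on $\gamma$ that keeps the dilation factor $\mu(I^{r})/\mu(I)$ bounded independently of the atom, so the constant $c_p$ indeed does not depend on $a$.
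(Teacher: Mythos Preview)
The paper does not prove this lemma; it is quoted as a known result from Weisz's monograph \cite{we3}. Your argument is correct and is precisely the standard proof that appears there: invoke the atomic decomposition of $H_p$, reduce via $\sigma$-sublinearity and the subadditivity of $t\mapsto t^p$ for $0<p<1$ to a uniform estimate on atoms, and obtain the latter by splitting $G^d$ into a neighbourhood $I^r$ of the support (controlled by the $L_\infty\to L_\infty$ bound together with the size condition $\Vert a\Vert_\infty\le\mu(I)^{-1/p}$) and its complement (controlled by $p$-quasilocality). Your remarks on the rigorous interchange of $T$ with the infinite atomic sum and on the uniformity of the dilation constant $\mu(I^r)/\mu(I)$ are apt and are handled in Weisz's treatment exactly along the lines you indicate.
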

\begin{lemma}[Goginava \cite{Gog-Ann}]\label{Lemma-Gog}
Let $n\in {\mathbb N}$ and $0<\alpha\leq 1$. Then 
$$
\int_G \max_{1\leq N <2^n}(A_{N-1}^\alpha|K_N^\alpha (x)|)^{1/(1+\alpha)}d\mu(x)\geq c(\alpha)
\frac{n}{\log (n+2)}.
$$
\end{lemma}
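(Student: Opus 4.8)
The plan is to bound the integral from below by localizing to the dyadic rings on which a suitably chosen Cesàro kernel is large. Throughout write $p_0=1/(1+\alpha)$, so that the integrand is $(A_{N-1}^\alpha|K_N^\alpha|)^{p_0}$, and read $K_N^\alpha=K_N^{w,\alpha}$ as the one-dimensional Walsh Cesàro kernel (this is the form Goginava establishes; the Kaczmarz case of Theorem \ref{thm3} is later reduced to it through the Skvortsov transformation $\tau$). Partition $G$ into the disjoint rings $E_s:=I_s\setminus I_{s+1}=\{x: x_0=\dots=x_{s-1}=0,\ x_s=1\}$, which satisfy $\mu(E_s)=2^{-s-1}$. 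Since the maximum dominates any single term, it suffices to attach to each admissible dyadic scale $A<n$ an explicit index $N=N(A)$ with $1\le N(A)<2^n$ and a set $F\subseteq G$ on which $A_{N(A)-1}^\alpha|K_{N(A)}^\alpha(x)|$ is controlled from below; the integral of the maximum is then estimated by assembling these local contributions.

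The analytic core is a pointwise lower estimate for $|K_N^\alpha|$ on the rings. Starting from the definition $K_N^\alpha=(A_N^\alpha)^{-1}\sum_{k=0}^N A_{N-k}^{\alpha-1}D_k$, I would use the closed form \eqref{Dir} of the $2^m$-th Dirichlet kernels together with the Paley recursion $D_{2^m+l}=D_{2^m}+w_{2^m}D_l$ $(0\le l<2^m)$ to split $K_N^\alpha$ into a principal block, carried by the indices $k$ near a single power of two, and a remainder. Choosing $N(A)$ of a two-block shape such as $2^A+2^s$ with $s<A$, the closed form renders the principal block constant and of fixed sign on $E_s$, producing a lower bound of the type $A_{N(A)-1}^\alpha|K_{N(A)}^\alpha(x)|\ge c(\alpha)2^{s(1+\alpha)}$; raising to the power $p_0$ converts this to $c(\alpha)2^{s}$, which balances against the ring measure $2^{-s-1}$. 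The coefficient asymptotics $A_j^\alpha\sim j^\alpha$ and Abel summation are the routine tools for absorbing the remainder term.

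Summing the local contributions yields the stated order, and the one delicate point — the main obstacle — is the appearance of the denominator $\log(n+2)$. Taken individually, each admissible index contributes $\int_G(A_{N-1}^\alpha|K_N^\alpha|)^{p_0}\ge c(\alpha)$, and there are of order $n$ admissible scales $A<n$, so a naive summation would suggest the clean bound $c(\alpha)n$; the uniform $L_1$-bound for the Cesàro kernels (the Walsh analogue of \eqref{eq-norm}) shows this cannot be improved indefinitely and, more importantly, that the extremal values of the distinct kernels cannot be attained on disjoint sets. Passing from the sum of the individual integrals to the integral of the pointwise maximum therefore requires disentangling the overlaps of the super-level sets of the different $K_N^\alpha$, and it is exactly this disjointification — equivalently, the covering argument needed to realize the supremum pointwise — that costs a factor $\log(n+2)$ and leaves the bound $c(\alpha)\,n/\log(n+2)$. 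I would accordingly concentrate the effort on three points: the explicit choice of the indices $N(A)$, the pointwise kernel estimate of the second step on the associated rings, and the combinatorial control of the overlaps that produces the sharp logarithmic loss.
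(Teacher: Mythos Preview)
The paper does not supply its own proof of this lemma: it is quoted from Goginava \cite{Gog-Ann} and used purely as a black box in the proof of Theorem~\ref{thm3}. There is therefore no argument in the present paper against which your sketch can be compared.

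On the mathematics of your outline, the explanation of the logarithmic loss is internally inconsistent. The rings $E_s=I_s\setminus I_{s+1}$ are pairwise \emph{disjoint}, and the integrand is a pointwise maximum over $N$; hence on each $E_s$ you may simply select the single index $N(s)$ produced by your second step and write
\[
\int_G \max_{1\le N<2^n}\bigl(A_{N-1}^\alpha|K_N^\alpha|\bigr)^{p_0}d\mu
\ \ge\ \sum_{s}\int_{E_s}\bigl(A_{N(s)-1}^\alpha|K_{N(s)}^\alpha|\bigr)^{p_0}d\mu,
\]
with no overlap to ``disentangle'' and no covering argument required: passing from one term to the maximum can only \emph{increase} a lower bound. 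In fact the elementary choice $N(s)=2^{s+1}$ already works. Using $D_{2^s+l}=D_{2^s}+r_sD_l$ together with $x_0=\dots=x_{s-1}=0$, $x_s=1$ on $E_s$, one computes $D_k(x)=k$ for $k\le 2^s$ and $D_k(x)=2^{s+1}-k$ for $2^s<k\le 2^{s+1}$, and since all the weights $A_j^{\alpha-1}$ are positive this gives $A_{N(s)}^\alpha K_{N(s)}^\alpha(x)\ge c(\alpha)2^{s(1+\alpha)}$ uniformly on $E_s$. The display is then already $\ge c(\alpha)n$, which is stronger than the stated $n/\log(n+2)$. So your steps 1--3, carried out with this choice, prove the lemma outright; the ``obstacle'' you describe in step~4 is illusory, and the appeal to the uniform $L_1$-bound on $K_N^\alpha$ is a non sequitur (it constrains $\int|K_N^\alpha|$, not $\int(A_{N-1}^\alpha|K_N^\alpha|)^{p_0}$). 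The $\log(n+2)$ in Goginava's formulation reflects his particular route, not an intrinsic loss, and for the application in Theorem~\ref{thm3} any bound tending to infinity suffices.
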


\section{Proofs of the theorems}
Now, we prove our main Theorems.
\begin{proof}[Proof of Theorem \ref{thm1}] 
Using   Lemma \ref{lemma-weisz} of Weisz we have to prove that  
the operator 
${\sigma}^{\kappa,\alpha, *}_L$
is bounded from the space $L_\infty$ to the space $L_\infty$. It immediately follows from the inequality 
\eqref{eq-norm}. 

Let $a$ be a $p$-atom, with  support $I$. We can assume that  $I=I_{N_1}\times\ldots \times  I_{N_d}$ (with $2^{N_j}\leq \gamma_j(2^{N_1})<2^{N_j+1}$, $j=2,\ldots , d$), $\Vert a\Vert_\infty \leq 2^{(N_1+\ldots +N_d)/p}$ and 
$\int_{I}a d\mu =0$.

Set $\delta:=\max\{ \zeta^{\log_{c_{j,1}}2\beta_j+1}: j=2,\ldots,d\}.$ 
If $n_1\leq 2^{N_1}/\delta$, then

\begin{eqnarray*}
n_j&\leq& \beta_j \gamma_j(n_1)\leq \beta_j \gamma_j (2^{N_1}\zeta^{-\log_{c_{j,1}}2\beta_j-1} )\\ 
&\leq& \beta_j \frac{1}{c_{j,1}^{\log_{c_{j,1}}2\beta_j+1}}\gamma_j(2^{N_1})
\leq  \frac{\gamma_j(2^{ N_1})}{2}\leq 2^{N_j}.
\end{eqnarray*}

$\zeta,c_{j,1},c_{j,2}>1$, $\beta_j\geq 1$ imply $n_1< 2^{N_1}$ and $n_j\leq \gamma_j(2^{N_1})/2< 2^{N_j}$ ($j=2,\ldots,d$). In this case  the $(m_1,\ldots, m_d)$-th Fourier coefficients are zeros for  
$m_1\leq n_1, \ldots ,m_d\leq n_d$. This gives 
$\sigma_n f=0$ ($n=(n_1,\ldots,n_d)$).

That is, we could suppose that $n_1>2^{N_1}/\delta.$ 
This yields that
$$
n_j\geq \frac{\gamma_j (n_1)}{\beta_j}
\geq 
\frac{\gamma_j(2^{N_1}/\delta)}{\beta_j}
\geq \frac{1}{\beta_j c_{j,2}^{\max\{\log_{c_{j,1}}2\beta_j+1: j=2,\ldots,d\}}}
\gamma_j(2^{N_1})
\geq \frac{\gamma_j(2^{N_1})}{\delta_j'}\geq \frac{2^{N_j}}{\delta'}
$$
with $\delta':=\max_{j=2,\ldots, d}\delta_j'$ for all $j=2,\ldots , d$.  $\delta'>1$ can be assumed.

The proof will be complete, if we show that 
the maximal operator ${\sigma}^{\kappa,   \alpha,*}_L$ is  $p$-quasilocal for $p_0<p\leq 1$. That is,
there exists a constant $c_p$ such that the inequality 
$$
\int_{\overline{I}} |{\sigma}^{\kappa,\alpha,*}_L a
|^{p}d\mu \leq c_p<\infty
$$
holds for all atom $a$ in $H_p^\gamma$ with support $I=I_{N_1}\times \ldots \times I_{N_d}$ (with $\overline{N_1}=(N_1, \ldots ,N_d)$). 
It is well known that the concept of $p$-quasi-locality of the maximal operator ${\sigma}^{\kappa,\alpha,*}$ can be modified as follows \cite{S1}: there exists $r=0,1,\ldots $, such that
\begin{equation}
\int_{\overline{I^r}} |{\sigma}^{\kappa,\alpha,*}_L a
|^{p}d\mu \leq c_p<\infty,
\end{equation}
where $I^r:= I_{N_1}^r\times \ldots \times I_{N_d}^r
:=I_{N_1-r}\times \ldots \times I_{N_d-r}$ ($N_j-r\geq 0$ for all $j=1,\ldots,d$). We will give the value of $r$ later.

Let us set $x=(x^1,\ldots,x^d)\in \overline{I^r}$. 
\[
\begin{split}
|\sigma_n^{\kappa,\alpha} a(x)|&=
\left|\int_{I} a(t^1,\ldots,t^d)K_{n_1}^{\kappa,\alpha_1}(x^1+t^1)\ldots K_{n_d}^{\kappa,\alpha_d}(x^d+t^d)d\mu(t)\right| \\
&\leq  2^{(N_1+\ldots+N_d)/p}\int_{I_{N_1}} |K_{n_1}^{\kappa,\alpha_1}(x^1+t^1)|d\mu(t^1)\ldots \int_{I_{N_d}}|K_{n_d}^{\kappa,\alpha_d}(x^d+t^d)|d\mu(t^d)
\end{split}
\]
Now, we decompose the set $\overline{I^r}=\overline{I_{N_1}^r\times\ldots \times I_{N_d}^r}$ as the following disjoint union
\[
\begin{split}
\overline{I^r}=&(\overline{I_{N_1}^r}\times \ldots \times \overline{I_{N_d}^r})\cup\\
&\cup ({I_{N_1}^r}\times \overline{I_{N_2}^r}\times \ldots \times \overline{I_{N_d}^r})\cup \ldots \cup 
(\overline{I_{N_1}^r}\times  \ldots \times\overline{I_{N_{d-1}}^r}\times {I_{N_d}^r})\cup\\
&\vdots\\
&\cup 
(\overline{I_{N_1}^r}\times {I_{N_2}^r}\times \ldots \times {I_{N_d}^r})\cup \ldots \cup 
({I_{N_1}^r}\times  \ldots \times{I_{N_{d-1}}^r}\times \overline{I_{N_d}^r}).
\end{split}
\]
Let us set $\delta'':=\max\{ \delta,\delta'\}$, set $r\in \mathbb P$ such that 
$2^{-r}\leq 1/\delta''\leq 2^{-r+1}$, and 
$L^{r,l}:={I_{N_1}^r}\times  \ldots \times{I_{N_{l}}^r}\times \overline{I_{N_{l+1}}^r} \times \ldots\times \overline{I_{N_d}^r}$ for $l=0,\ldots,d$. 
We define 
$$J_i:= \int_{{I_{N_i}^r}}\left( \sup_{n_i\geq 2^{N_i}/\delta''}\int_{I_{N_i}} |K_{n_i}^{\kappa,\alpha_i}(x^i+t^i)|d\mu(t^i)\right)^p\mu(x^i),\quad i=1,\ldots, l,$$
$$\overline{J_j}:= \int_{\overline{I_{N_j}^r}}\left( \sup_{n_j\geq 2^{N_j}/\delta''}\int_{I_{N_j}} |K_{n_j}^{\kappa,\alpha_j}(x^j+t^j)|d\mu(t^j)\right)^p\mu(x^j),\quad j=l+1,\ldots, d.$$
Now, we get 
$$\int_{L^{r,l}} |{\sigma}^{\kappa,\alpha,*}_L a
|^{p}d\mu\leq 2^{N_1+\ldots+N_d}
J_1 \cdot \ldots \cdot J_l\cdot \overline{J_{l+1}}\cdot \ldots \cdot 
\overline{J_d}.
$$

First, we discuss the integrals $J_i$ ($i=1,\ldots, l$). Inequality \eqref{eq-norm} and the definitions of $\delta'',r$ immediately yield that 
\begin{equation}\label{J-i}
J_i \leq   2^{-(N_i-r)}\left( \sup_{n_1\in {\mathbb N}} \Vert K_{n_i}^{\kappa,\alpha_i}\Vert_1\right)^p\leq c_p 2^{-N_i}.
\end{equation}

Second, we discuss the integrals $\overline{J_j}$ ($j=l+1,\ldots, d$).
In the paper \cite[48-59 pages]{S3}, Simon showed that 
\begin{equation}\label{eq-Simon}
\int_{\overline{I_{N}}}\left( \sup_{n\geq 2^{N}}\int_{I_N} |K_{n}^{\kappa,\alpha}(x+t)|d\mu(t)\right)^p\mu(x)
\leq c_p2^{-N}\quad \textrm{ if } p>\frac{1}{1+\alpha}\ (0<\alpha\leq 1).
\end{equation}
Using this we write 
\begin{equation}\label{J-j}
\overline{J_j} \leq
\int_{\overline{I_{N_j}^r}}\left( \sup_{n_j\geq 2^{N_j-r}}\int_{I_{N_j}^r} |K_{n_j}^{\kappa,\alpha_j}(x^j+t^j)|d\mu(t^j)\right)^p\mu(x^j)\leq c_p 2^{-N_j}
\quad \textrm{if }p>\frac{1}{1+\alpha_j}.
\end{equation}
Inequalities \eqref{J-i}, \eqref{J-j} yield 
$$\int_{L^{r,l}} |{\sigma}^{\kappa,\alpha,*}_L a
|^{p}d\mu\leq c_p\quad \textrm{for all $l$ if }p>p_0.
$$
The decomposition of $\overline{I^r}$ gives
$$
\int_{\overline{I^r}} |{\sigma}^{\kappa,\alpha,*}_L a
|^{p}d\mu\leq c_{p,d}\quad \textrm{if }p>p_0.
$$
This completes the proof of Theorem \ref{thm1}.
\end{proof}

The weak type $(1,1)$ inequality follows by interpolation.
The set of Walsh-Kaczmarz polynomials is dense in $L_1$. The weak-type (1,1) inequality and the usual density argument imply Theorem \ref{thm2}.

\begin{proof}[Proof of Theorem \ref{thm3}] In the present proof we use a  counterexample martingale. 
Let us set
$$
f_{\overline{n_1}}(x):=(D_{2^{n_1+1}}(x^1)-D_{2^{n_1}}(x^1))\prod_{j=2}^{d}w_{2^{n_j}-1}(x^j) 
,
$$ where $n_2,\ldots ,n_d$ is defined  to $n_1$, earlier.

Now, we calculate $S_{j}^\kappa (f_{\overline{n_1}};x)$. 
Since, $\omega_{2^n-1}(x)=r_{n-1}(x) (-1)^{\sum_{i=0}^{n-2}x_i}=\kappa_{2^n-1}(x)$, we have
$$
\hat{f}_{\overline{n_1}}^\kappa (k)=\begin{cases}
1,& \textrm{ if } k_1=2^{n_1},\ldots ,2^{n_1+1}-1, \textrm{ and } k_j=2^{n_j}-1 \textrm{ for all } j=2,\ldots ,d; \\
0,& \textrm{ otherwise.}
             \end{cases}
$$
\begin{eqnarray}\label{S-j}
S_j^\kappa (f_{\overline{n_1}},x)&=&\sum_{\nu=0}^{j_1-1}\hat{f}_{\overline{n_1}}^\kappa (\nu, 2^{n_2}-1,...,2^{n_{d}}-1)\omega_\nu(x^1)\prod_{l=2}^{d}w_{2^{n_l}-1}(x^l)\nonumber\\
&=&\begin{cases}
(D_{j_1}(x^1)-D_{2^{n_1}}(x^1))\prod_{l=2}^{d}w_{2^{n_l}-1}(x^l)& \textrm{if } 
j_1=2^{n_1}+1,...,2^{n_1+1}-1, \textrm{ and}\\
 & j_l\geq 2^{n_l} \textrm{ for all } l=2,...,d;\\
f_{\overline{n_1}}(x)& \textrm{if }j_1\geq 2^{n_1+1} \textrm{ and }j_l\geq 2^{n_l} \\ 
 & \textrm{for all } l=2,...,d;\\
0& \textrm{otherwise.}
\end{cases}
\end{eqnarray}
We immediately have that 
$$
f_{\overline{n_1}}^*(x)=\sup_{m_1\in {\mathbb N}}|S_{2^{m_1},\ldots, 2^{m_d}}(f_{\overline{n_1}},x)|=|f_{\overline{n_1}}(x)|,
$$
where $\overline{m_1}=(m_1,\ldots , m_d)$.
\begin{equation}\label{norm}
\Vert f_{\overline{n_1}}\Vert_{H_{p_0}^\gamma}=\Vert f_{\overline{n_1}}^*\Vert_{p_0}=\Vert D_{2^{n_1}}\Vert_{p_0}=2^{(1-1/p_0)n_1} <\infty.
\end{equation}
That is $f_{\overline{n_1}}\in H_{p_0}^\gamma$.

We can write the $n$th  Dirichlet kernel with respect to the Walsh-Kaczmarz system in the following form:
\begin{eqnarray}\label{eq-mag}
D_n^\kappa (x)&=&D_{2^{\vert n\vert}}(x)
+\sum_{k=2^{\vert n\vert}}^{n-1}r_{\vert k\vert}(x)w_{k-2^{|n|}}(\tau_{\vert k\vert}(x))\nonumber \\
&=&D_{2^{\vert n\vert}}(x)+r_{\vert n\vert}(x)
D_{n-2^{\vert n\vert}}^w(\tau_{\vert n\vert}(x))
\end{eqnarray}
We set $L_1^N:=2^{n_1}+N,$ where $0<N<2^{n_1}$ and 
$L_j^N:= [\gamma_j (2^{n_1}+N)]$ for $j=2,\ldots,d$ (where $[x]$ denotes the integer part of $x$). In this case $L^N:=(L_1^N,\ldots ,L_d^N)\in L$. 
Let us calculate $\sigma_{L^N}^{\kappa,\alpha} f_{\overline{n_1}}$. 

By inequalities \eqref{eq-mag} and \eqref{S-j} we get 
\begin{eqnarray*}
|\sigma_{L^N}^{\kappa,\alpha} f_{\overline{n_1}} (x)|&= &\frac{1}{\prod_{j=1}^d A_{L_j^N}^{\alpha_j}}\left|\sum_{j=1}^d\sum_{k_j=0}^{L_j^N}\prod_{i=1}^d A_{L_i^N-k_i}^{\alpha_i-1}
S_{k}^\kappa (f_{\overline{n_1}};x)\right|\\
&=&
\frac{1}{\prod_{j=1}^{d} A_{L_j^N}^{\alpha_j}}\left|\sum_{j=2}^{d}\sum_{k_j=2^{n_j}}^{L_j^N}
\sum_{k_1=2^{n_1}+1}^{L_1^N}
\prod_{i=1}^d A_{L_i^N-k_i}^{\alpha_i-1}
S_{k}^\kappa (f_{\overline{n_1}};x)
\right|\\
&=&
\frac{1}{\prod_{j=1}^{d} A_{L_j^N}^{\alpha_j}}\left|\sum_{j=2}^{d}\sum_{k_j=2^{n_j}}^{L_j^N}
\sum_{k_1=2^{n_1}+1}^{L_1^N}
\prod_{i=1}^d A_{L_i^N-k_i}^{\alpha_i-1}
\prod_{l=2}^{d}w_{2^{n_l}-1}(x^l)(D_{k_1}^\kappa(x^1)-D_{2^{n_1}}(x^1))\right|\\
&=&
\frac{1}{\prod_{j=1}^d A_{L_j^N}^{\alpha_j}}\left|\sum_{j=2}^{d}
\sum_{k_j=1}^{L_j^N-2^{n_j}}
\prod_{i=2}^{d} A_{L_i^N-2^{n_i}-k_i}^{\alpha_i-1}
\sum_{k_1=1}^{L_1^N-2^{n_1}}A_{L_1^N-2^{n_1}-k_1}^{\alpha_1-1}
D_{k_1}^w(\tau_{n_1}(x^1))
\right|\\
&=&\frac{1}{\prod_{j=1}^d A_{L_j^N}^{\alpha_j}}
\left|\sum_{j=2}^{d}
\sum_{k_j=1}^{L_j^N-2^{n_j}}
\prod_{i=2}^{d} A_{L_i^N-2^{n_i}-k_i}^{\alpha_i-1}
\right|
 \left| A_{L_1^N-2^{n_1}}^{\alpha_1}K_{L_1^N-2^{n_1}}^{w,\alpha_1}(\tau_{n_1}(x^1))\right|\\
 &\geq& \frac{c(\alpha)}{2^{n_1\alpha_1}}A_N^{\alpha_1}|K_N^{w,\alpha_1}(\tau_{n_1}(x^1))|.
\end{eqnarray*}
Now, we write that 
\begin{equation*}
\sigma_L^{\kappa,\alpha,*}f_{\overline{n_1}} (x)
\geq \max_{1\leq N< 2^{n_1}}|\sigma_{L^N}^{\kappa,\alpha} f_{\overline{n_1}} (x)|
\geq \frac{c(\alpha)}{2^{n_1\alpha_1}}
\max_{1\leq N< 2^{n_1}}
A_N^{\alpha_1}|K_N^{w,\alpha_1}(\tau_{n_1}(x^1))|.
\end{equation*}
Inequality \eqref{norm} and Lemma \ref{Lemma-Gog} yield 
\begin{eqnarray*}
\frac{\Vert \sigma_L^{\kappa,\alpha,*}f_{\overline{n_1}}\Vert _{p_0}}{\Vert f_{\overline{n_1}}\Vert_{H_{p_0}^\gamma}}
&\geq& \frac{1}{2^{(1-1/p_0)n_1}} \left( 
\int_{G^d}\max_{1\leq N< 2^{n_1}}|\sigma_{L^N}^{\kappa,\alpha} f_{\overline{n_1}} (x)|^{p_0}d\mu(x)
\right)^{1/p_0}\\
&\geq& \frac{c(\alpha)2^{n_1\alpha_1}}{2^{n_1\alpha_1}}
\left( 
\int_{G}\max_{1\leq N< 2^{n_1}}
A_N^{\alpha_1}|K_N^{w,\alpha_1}(\tau_{n_1}(x^1))|
d\mu(x^1)
\right)^{1/p_0}\\
&=&\frac{c(\alpha)2^{n_1\alpha_1}}{2^{n_1\alpha_1}}
\left( 
\int_{G}\max_{1\leq N< 2^{n_1}}
A_N^{\alpha_1}|K_N^{w,\alpha_1}(x^1)|
d\mu(x^1)
\right)^{1/p_0}\\
&\geq& c(\alpha)\left( \frac{n_1}{\log n_1}
\right)^{1+\alpha_1}\to\infty  \quad \textrm{while } n_1\to \infty.
\end{eqnarray*}
This completes the proof of Theorem \ref{thm3}.
\end{proof}

\end{document}